\newtheorem{thm}{Theorem}
\newtheorem{lem}[thm]{Lemma}
\newtheorem{prop}[thm]{Proposition}
\newtheorem{rmk}[thm]{Remark}
\numberwithin{thm}{section}
\numberwithin{equation}{section}
\newcommand{\cv}{\mathbf{C}}
\newcommand{\zv}{\mathbf{Z}}
\newcommand{\bv}{\mathbf{B}}
\newcommand{\nv}{\mathbf{N}}
\newcommand{\aut}{\textup{Aut}}
\newcommand{\Jac}{\textup{Jac}}
\newcommand{\grad}{\textup{grad}}
\newcommand{\diag}{\textup{diag}}
\newcommand{\Diag}{\textup{Diag}}
\newcommand{\mbm}{\mathbf{m}}
\begin{document}

\title[The degree of biholomorphisms of quasi-Reinhardt domains]{The degree of biholomorphisms of quasi-Reinhardt domains fixing the origin}

\author{Feng Rong}

\address{School of Mathematical Sciences, Shanghai Jiao Tong University, 800 Dong Chuan Road, Shanghai, 200240, P.R. China}
\email{frong@sjtu.edu.cn}

\begin{abstract}
We give a description of biholomorphisms of quasi-Reinhardt domains fixing the origin via Bergman representative coordinates, which are shown to be polynomial mappings with a degree bound given by the so-called ``resonance order".
\end{abstract}

\keywords{quasi-Reinhardt domain; biholomorphism; resonance order; quasi-resonance order}

\subjclass[2010]{32A07, 32H02}

\thanks{The author is partially supported by the National Natural Science Foundation of China (Grant No. 11431008).}

\maketitle

\section{Introduction}

The study of ``special domains" invariant under a compact Lie group action is of classical interest (see e.g. \cite{H:special}), with Cartan's Linearity Theorem for circular domains being one of the most well-known results. While it is known that biholomorphisms between these special domains fixing the origin are all polynomials with uniform degree upper bound (see e.g. \cite{NZ:special}), such a uniform upper bound can be explicitly given in the case of quasi-circular domains and quasi-Reinhardt domains, thanks to the notion of \textit{resonance order} and \textit{quasi-resonance order} introduced in \cite{R:quasi, DR:auto}.

Let $T^r$ be the torus group of dimension $r\ge 1$. Let $\rho:T^r\rightarrow \textup{GL}(n,\cv)$ be a holomorphic linear action of $T^r$ on $\cv^n$ such that the only $\rho$-invariant holomorphic functions on $\cv^n$ are constant. Let $\lambda=(\lambda_1, \cdots, \lambda_r)\in T^r$, $\mbm_i=(m_{i1},\cdots,m_{ir})\in \zv^r$, $1\le i \le n$, and $z=(z_1,\cdots,z_n)\in\cv^n$. Then an action $\rho$ of $T^r$ on $\cv^n$ can be written as
\begin{equation}\label{E:rho}
\rho(\lambda)z=(\lambda^{\mbm_1}z_1,\cdots,\lambda^{\mbm_n}z_n),
\end{equation}
where $\lambda^{\mbm_i}=\prod_{j=1}^r \lambda_j^{m_{ij}}$ for each $i$. We call $\mbm_i$, $1\le i\le n$, the \textit{weight} of the action $\rho$.

Let $D$ be a bounded domain in $\cv^n$ containing the origin. We say that $D$ is \textit{quasi-Reinhardt} (of rank $r$) if it is $\rho$-invariant. When $r=1$, we say that $D$ is \textit{quasi-circular}.

Denote by $\nv$ the set of non-negative integers. Let $\alpha=(\alpha_1,\cdots,\alpha_n)\in \nv^n$ and $\mbm^j=(m_{1j},\cdots,m_{nj})$, $1\le j\le r$. For $1\le i\le n$, we define the \textit{i-th resonance set} as
$$E_i:=\{\alpha;\ \alpha\cdot \mbm^j=m_{ij},\ 1\le j\le r\},$$
where $\alpha\cdot \mbm^j=\sum_{i=1}^n \alpha_i m_{ij}$, and the \textit{i-th resonance order} as
$$\mu_i:=\max\{|\alpha|;\ \alpha\in E_i\},$$
where $|\alpha|:=\sum_{i=1}^n \alpha_i$. Then, the \textit{resonance order} is defined as
$$\mu:=\max\limits_{1\le i\le n} \mu_i.$$

Let $K_D(z,w)$ be the Bergman kernel, i.e. the reproducing kernel of the space of square integrable holomorphic functions on $D$. Since $D$ is bounded, we have $K_D(z,z)>0$ for $z\in D$. The Bergman metric tensor $T_D(z,w)$ is defined as the $n\times n$ matrix with entries $t_{ik}^D(z,w)=\frac{\partial^2}{\partial \bar{w}_i\partial z_k}\log K_D(z,w)$, $1\le i,k\le n$. For $z\in D$, we know that $T_D(z,z)$ is a positive definite Hermitian matrix (see e.g. \cite{B:Book}).

The Bergman representative coordinates at $\xi\in D$ is defined as (see e.g. \cite{Lu, GKK:Book})
\begin{equation}\label{E:B}
\sigma_\xi^D(z):=T_D(\xi,\xi)^{-1} \left.\grad_{\bar{w}}\log \frac{K_D(z,w)}{K_D(w,w)}\right|_{w=\xi}.
\end{equation}
As shown in \cite{LR:quasi}, one has $K_D(z,0)\equiv K_D(0,0)$ when $D$ is a quasi-Reinhardt domain. Thus the Bergman representative coordinates $\sigma_0^D(z)$ is defined for all $z\in D$.

For later use, we also record here the following well-known transformation formula for the Bergman metric tensor
\begin{equation}\label{E:T}
T_D(z,w)=\overline{\Jac_f^t(w)}T_D(f(z),f(w))\Jac_f(z),\ \ \ f\in \aut(D).
\end{equation}

The main purpose of this paper is to prove the following

\begin{thm}\label{T:main}
Let $f$ be a biholomorphism of quasi-Reinhardt domains $D_1$ and $D_2$, fixing the origin. Set $\sigma^i(z)=\sigma_0^{D_i}(z)$, $i=1,2$, and $J_f$ the linear part of $f$. Then,\\
\textup{(i)} $f=(\sigma^2)^{-1}\circ J_f\circ \sigma^1$;\\
\textup{(ii)} The degrees of $\sigma^i$ and $(\sigma^i)^{-1}$, $i=1,2$, are bounded by the resonance order of $D_i$.
\end{thm}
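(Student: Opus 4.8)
The plan is to first derive the fundamental transformation law for the representative coordinates under a biholomorphism, which gives part (i), and then to exploit the torus symmetry to obtain the polynomiality and the degree bound in part (ii). For the transformation law I would start from the classical identity for the Bergman kernel, $K_{D_1}(z,w)=\det\Jac_f(z)\,\overline{\det\Jac_f(w)}\,K_{D_2}(f(z),f(w))$. Forming $\log\big(K_{D_1}(z,w)/K_{D_1}(w,w)\big)$ makes the antiholomorphic determinant factor cancel, while the holomorphic one contributes nothing to $\grad_{\bar w}$; the chain rule applied to $f(w)$ then produces precisely the factor $\overline{\Jac_f(w)}^{\,t}$. Substituting into \eqref{E:B} and using \eqref{E:T} to rewrite $T_{D_1}(\xi,\xi)^{-1}$, this factor cancels and one is left with the linearization identity $\sigma^{D_2}_{f(\xi)}\circ f=\Jac_f(\xi)\,\sigma^{D_1}_\xi$. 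Taking $\xi=0$, so that $f(\xi)=0$ and $\Jac_f(0)=J_f$, yields $\sigma^2\circ f=J_f\circ\sigma^1$; once each $\sigma^i$ is shown to be a polynomial automorphism of $\cv^n$ (below), $(\sigma^2)^{-1}$ is legitimate and part (i) follows.

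Next I would record two normalizations from \eqref{E:B}, namely $\sigma^{D}_0(0)=0$ and $\Jac_{\sigma^{D}_0}(0)=I$; the latter holds because $\partial_{z_k}\,\partial_{\bar w_i}\log K_D(z,w)=t^D_{ik}(z,w)$, so at the origin the Jacobian of $\grad_{\bar w}\log(K_D/K_D(w,w))$ equals $T_D(0,0)$ and cancels the prefactor $T_D(0,0)^{-1}$. Applying the linearization identity with $D_1=D_2=D$ and $f=\rho(\lambda)$, which fixes the origin and is linear, so $\Jac_{\rho(\lambda)}(0)=\rho(\lambda)$, shows that $\sigma:=\sigma^D_0$ is $\rho$-equivariant: $\sigma\circ\rho(\lambda)=\rho(\lambda)\circ\sigma$. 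Consequently its $l$-th component obeys $\sigma_l(\rho(\lambda)z)=\lambda^{\mbm_l}\sigma_l(z)$, i.e. $\sigma_l$ is homogeneous of weight $\mbm_l$.

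For part (ii), note that $\grad_{\bar w}\log K_D$ is holomorphic in $z$ wherever defined, so $\sigma$ is holomorphic near $0$ and has a convergent Taylor expansion there. Weight-$\mbm_l$ homogeneity forces every nonzero Taylor coefficient of $\sigma_l$ to lie on a monomial $z^\alpha$ with $(\alpha\cdot\mbm^1,\dots,\alpha\cdot\mbm^r)=\mbm_l$, that is, with $\alpha\in E_l$. The standing hypothesis that the only $\rho$-invariant holomorphic functions are constant says exactly that $\alpha=0$ is the only weight-zero solution; by a standard duality argument this produces $\xi\in\rv^r$ with $\langle\mbm_i,\xi\rangle>0$ for all $i$, whence $|\alpha|$ is bounded on each $E_l$, so $E_l$ is finite and $\max_{\alpha\in E_l}|\alpha|=\mu_l$. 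Therefore the Taylor series of $\sigma_l$ terminates and $\sigma_l$ is a polynomial of degree at most $\mu_l\le\mu$, giving $\deg\sigma\le\mu$.

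Finally, since $\Jac_\sigma(0)=I$, the map $\sigma$ has a holomorphic local inverse $\psi$ near $0$ with $\psi(0)=0$; inverting the equivariance relation shows $\psi$ is again $\rho$-equivariant, so each $\psi_l$ is holomorphic near $0$ and homogeneous of weight $\mbm_l$, hence—by the same finiteness of $E_l$—a polynomial of degree at most $\mu_l\le\mu$. The identity $\sigma\circ\psi=\textup{id}$ holds near $0$ and therefore, being an identity of polynomials, on all of $\cv^n$; thus $\sigma$ is a polynomial automorphism with polynomial inverse of degree $\le\mu$, which also legitimizes $(\sigma^2)^{-1}$ in part (i), and the same argument applies to $D_2$. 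I expect the main obstacle to be the two ``finiteness/globalization'' points—establishing that each resonance set $E_l$ is finite, and that the germ $\psi$ is globally a polynomial—together with keeping the conjugate-transpose bookkeeping in the kernel computation correct; the symmetry and homogeneity arguments are then routine.
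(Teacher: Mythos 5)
Your proposal is correct, and while part (i) matches the paper's argument, your part (ii) takes a genuinely different route. For (i), the paper simply quotes the linearization property \eqref{E:linear} of Bergman representative coordinates and identifies the linear map $L_f$ with $J_f$ via $\Jac_{\sigma^i}(0)=I_n$ (formula \eqref{E:J}); you re-derive the identity $\sigma^{D_2}_{f(\xi)}\circ f=\Jac_f(\xi)\,\sigma^{D_1}_\xi$ from the kernel transformation rule, with the conjugate-transpose bookkeeping handled correctly, so the content is the same. For (ii), the paper never uses equivariance of $\sigma$ itself: it applies \eqref{E:T} with $f=\rho(\lambda)$ to the metric tensor, kills Taylor coefficients of $t_{ik}^D(z,0)$ to get that $a_{ik}^\alpha\neq 0$ forces $\alpha+e_k\in E_i$, proves the ordering Lemma \ref{L:order} so that after proper ordering $\Jac_\sigma(z)=I_n+N(z)$ with $N$ strictly lower triangular, integrates to get $\sigma_i(z)=z_i+g_i(z)$ with $g_i$ resonant (Proposition \ref{P:resonant}), and computes the inverse by a finite triangular induction. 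You instead push the character argument one level up: the equivariance $\sigma\circ\rho(\lambda)=\rho(\lambda)\circ\sigma$ (obtained from the linearization identity applied to $\rho(\lambda)$) forces each $\sigma_l$ to consist of $l$-th resonant monomials; Gordan-type duality produces $\xi\in\rv^r$ with $\langle\mbm_i,\xi\rangle>0$, giving finiteness of each $E_l$, hence polynomiality of degree $\le\mu_l$; and the inverse is handled by the inverse function theorem plus equivariance of the local inverse, globalized by polynomial identity. Your route buys a cleaner, more self-contained proof: it avoids Lemma \ref{L:order}, the proper ordering, and the paper's reduction to distinct weights, treats $\sigma$ and $\sigma^{-1}$ symmetrically, and actually establishes that the resonance orders are finite, which the paper takes for granted (deferring to the cited references) in the very definition of $\mu$. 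The paper's route buys extra structural information: each $g_i$ involves only variables $z_j$ with $\mbm_j<\mbm_i$, so global invertibility is manifest and the inverse is computed by an explicit induction with no appeal to the inverse function theorem or duality. One point worth making explicit in your write-up: the equivariance of the local inverse $\psi$ needs a $\rho$-invariant neighborhood of $0$ on which to compare both sides, which exists because each $\rho(\lambda)$ multiplies coordinates by unimodular numbers and therefore preserves polydiscs centered at the origin.
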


The proof of Theorem \ref{T:main} is given in section \ref{S:QR}. The corresponding result for automorphisms of quasi-circular domains fixing the origin was given by \cite[Lemma 3.1; Corollary 3.2; Lemma 3.3]{R:degree}, although the stronger statement \cite[Theorem 1.1]{R:degree} is not correct. We give a clarification of the situation in section \ref{S:QC}.

\section{Quasi-Reinhardt domains}\label{S:QR}

Let $D_1$ and $D_2$ be bounded quasi-Reinhardt domains containing the origin. Set $\sigma^i(z)=\sigma_0^{D_i}(z)$, $i=1,2$. Then, it is well-known (see e.g. \cite{GKK:Book}) that for any biholomorphism $f$ between $D_1$ and $D_2$ with $f(0)=0$ there exists a linear map $L_f$ between $\sigma^1(D_1)$ and $\sigma^2(D_2)$ such that
\begin{equation}\label{E:linear}
\sigma^2\circ f=L_f\circ\sigma^1.
\end{equation}

From the definition \eqref{E:B}, one readily checks that $\sigma^i(0)=0$ and
\begin{equation}\label{E:J}
\Jac_{\sigma^i}(z)=T_{D_i}(0,0)^{-1}T_{D_i}(z,0).
\end{equation}
In particular, one has $J_{\sigma^i}(z):=Jac_{\sigma^i}(0)\cdot z^t=z$, which also implies that $\sigma^i$'s are invertible. Combining these facts with \eqref{E:linear}, one sees that the linear map in \eqref{E:linear} is in fact just the linear part $J_f$ of $f$, and
\begin{equation}\label{E:linear2}
f=(\sigma^2)^{-1}\circ J_f\circ\sigma^1.
\end{equation}
This proves Theorem \ref{T:main}, (i).

To study the degree of the Bergman representative coordinates of a quasi-Reinhardt $D$, we first need to order the weight $\mbm_i$, $1\le i\le n$, in a proper way.

Without loss of generality and for simplicity, we will assume that all $\mbm_i$'s are distinct. (In the general case, whenever $\mbm_i=\mbm_j$ one can then treat $z_i$ and $z_j$ as the same, resulting in ``Jordan blocks", which do not affect the degree.)

We say that $\mbm_i<\mbm_j$ if there exists $\alpha\in E_j$ with $\alpha_i\neq 0$.

\begin{lem}\label{L:order}
For $1\le i\neq j\le n$, $\mbm_i<\mbm_j$ and $\mbm_j<\mbm_i$ can not hold at the same time.
\end{lem}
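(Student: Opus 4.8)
The plan is to translate the purely combinatorial ordering into a genuine inequality between positive real numbers, using the standing hypothesis that the only $\rho$-invariant holomorphic functions are constant. First I would unwind the definitions: since $\mbm_k=(m_{k1},\ldots,m_{kr})$, the condition $\alpha\in E_j$ reads $\sum_{k=1}^n\alpha_k m_{kl}=m_{jl}$ for every $1\le l\le r$, i.e. $\sum_{k=1}^n\alpha_k\mbm_k=\mbm_j$ as vectors in $\zv^r$. Thus $\mbm_i<\mbm_j$ means precisely that $\mbm_j$ is a nonnegative integer combination $\sum_k\alpha_k\mbm_k$ of the weights in which $\mbm_i$ occurs with coefficient $\alpha_i\ge 1$.

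Next I would extract the force of the invariance hypothesis. A monomial $z^\alpha$ is $\rho$-invariant exactly when $\sum_k\alpha_k\mbm_k=0$, so the requirement that the only invariant holomorphic functions on $\cv^n$ be constant is equivalent to the statement that $\sum_{k=1}^n\alpha_k\mbm_k=0$ has no solution $\alpha\in\nv^n\setminus\{0\}$. Since the $\mbm_k$ are integer vectors, clearing denominators shows this equation also has no nonzero solution in $\rv^n_{\ge 0}$. By Gordan's transposition theorem applied to the $n\times r$ matrix with rows $\mbm_1,\ldots,\mbm_n$, the absence of such a nonnegative relation yields a vector $v\in\rv^r$ with $\langle\mbm_k,v\rangle>0$ for every $1\le k\le n$. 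Set $w_k:=\langle\mbm_k,v\rangle>0$.

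Finally I would show that the ordering strictly increases $w$. If $\mbm_i<\mbm_j$, pick $\alpha\in E_j$ with $\alpha_i\ge 1$; pairing $\sum_k\alpha_k\mbm_k=\mbm_j$ with $v$ gives $w_j=\sum_k\alpha_k w_k\ge\alpha_i w_i\ge w_i$, and the only way equality can hold throughout is $\alpha=e_i$ (all other coefficients zero and $\alpha_i=1$), which would force $\mbm_i=\mbm_j$. Under the standing assumption that the weights are distinct this is impossible for $i\ne j$, so $w_i<w_j$ strictly. Symmetrically $\mbm_j<\mbm_i$ would give $w_j<w_i$, and these two strict inequalities cannot hold simultaneously; this proves the lemma. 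The main obstacle is the strictness in this last step: producing the positive functional $v$ is the conceptual crux, but one must take care to exclude the degenerate case $\alpha=e_i$, which is exactly where the distinctness of the weights is needed.
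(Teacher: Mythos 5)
Your proof is correct, but it takes a genuinely different route from the paper's. The paper argues by direct construction: assuming both $\mbm_i<\mbm_j$ and $\mbm_j<\mbm_i$, it substitutes one resonance relation into the other to manufacture a nonzero $\gamma\in\nv^n$ with $\gamma\cdot\mbm^k=0$ for all $1\le k\le r$, so that $z^\gamma$ is a nonconstant $\rho$-invariant monomial, contradicting the standing hypothesis; nothing beyond the definitions is used. You instead dualize: you recast the hypothesis as the nonexistence of a nonzero $\alpha\in\nv^n$ with $\sum_k\alpha_k\mbm_k=0$, invoke Gordan's theorem to produce $v\in\rv^r$ with $w_k=\langle\mbm_k,v\rangle>0$ for every $k$, and then show that $\mbm_i<\mbm_j$ forces $w_i<w_j$ --- the strictness step being exactly where distinctness of the weights enters, just as it enters (implicitly) in the paper's claim that $|\alpha|\ge 2$. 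Your argument buys more than the lemma: the positive grading $w$ shows the relation $<$ is acyclic and strictly increases along any chain, so the existence of the ``proper ordering'' used afterwards comes for free, whereas the paper's lemma gives only antisymmetry and leaves transitivity/acyclicity implicit; the paper's argument, in exchange, is elementary and self-contained. One point you should shore up: the passage from ``no nonzero solution in $\nv^n$'' to ``no nonzero solution in $\rv^n_{\ge 0}$'' is not accomplished by clearing denominators alone, which only handles rational solutions; you need the standard fact that a polyhedral cone cut out by rational data, if it contains a nonzero real point, contains a nonzero rational point (equivalently, apply the rational form of Gordan's theorem, whose alternatives can be taken rational and hence, after scaling, integral).
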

\begin{proof}
Assume that $\mbm_i<\mbm_j$ and $\mbm_j<\mbm_i$ hold at the same time. Then, there exists $\alpha\in \nv^n$ with $\alpha_i\ge 1$, $\alpha_j=0$ and $|\alpha|\ge 2$ such that
\begin{equation}\label{E:ij}
\alpha\cdot \mbm^k=m_{jk},\ \ \ \forall\ 1\le k\le r,
\end{equation}
and there exists $\beta\in \nv^n$ with $\beta_j\ge 1$, $\beta_i=0$ and $|\beta|\ge 2$ such that
\begin{equation}\label{E:ji}
\beta\cdot \mbm^k=m_{ik},\ \ \ \forall\ 1\le k\le r.
\end{equation}
Set $\gamma=(\gamma_1,\cdots,\gamma_n)$ with $\gamma_i=\beta_j\alpha_i-1$ and $\gamma_l=\beta_j\alpha_l+\beta_l$, $l\neq i$. Then one has $\gamma\in \nv^n$ and
$$|\gamma|=\beta_j|\alpha|+\sum_{l\neq i} \beta_l-1\ge |\beta|+(2\beta_j-1)\ge |\beta|+1\ge 3.$$
From \eqref{E:ij} and \eqref{E:ji}, one gets
$$\gamma\cdot \mbm^k=0,\ \ \ \forall\ 1\le k\le r.$$
This implies that $z^\gamma$ is invariant under the $\rho$-action \eqref{E:rho}, which contradicts with the assumption that the only $\rho$-invariant holomorphic functions on $\cv^n$ are constant.
\end{proof}

For a quasi-Reinhardt domain $D$ with weight $\mbm_i$, we say that $\mbm_i$'s are \textit{properly ordered} if $\mbm_i<\mbm_j$ only for $i<j$, $1\le i\neq j\le n$. (Note that such an ordering is only partial and not unique, and for definitiveness we can assign the larger indices to those $\mbm_i$'s without any resonance relations.)

A monomial $z^\alpha$ is called an \textit{i-th resonant monomial} if $\alpha\in E_i$.

\begin{prop}\label{P:resonant}
Let $D$ be a bounded quasi-Reinhardt domain containing the origin with weight $\mbm_i$ properly ordered. Set $\sigma(z)=\sigma_0^D(z)=(\sigma_1(z),\cdots,\sigma_n(z))$. Then for each $1\le i\le n$, $\sigma_i(z)=z_i+g_i(z)$, where $g_i(z)$ contains only nonlinear $i$-th resonant monomials. The same is true for $\sigma^{-1}(z)$.
\end{prop}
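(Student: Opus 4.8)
The plan is to exploit the fact that every torus element $\rho(\lambda)$ is itself an automorphism of $D$ fixing the origin, so that the functional equation \eqref{E:linear} applies to the choice $f=\rho(\lambda)$. Since $\rho(\lambda)$ is linear, its linear part $L_{\rho(\lambda)}$ equals $\rho(\lambda)$, and \eqref{E:linear} (with $D_1=D_2=D$, $\sigma^1=\sigma^2=\sigma$) collapses to the equivariance relation $\sigma\circ\rho(\lambda)=\rho(\lambda)\circ\sigma$, that is,
\begin{equation*}
\sigma_i(\lambda^{\mbm_1}z_1,\cdots,\lambda^{\mbm_n}z_n)=\lambda^{\mbm_i}\,\sigma_i(z),\qquad 1\le i\le n,\ \lambda\in T^r.
\end{equation*}
Because $\sigma$ is holomorphic near the origin, each $\sigma_i$ admits a convergent power series $\sigma_i(z)=\sum_\alpha c^{(i)}_\alpha z^\alpha$ there, and the whole argument will be a coefficient comparison against this identity.

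The central step is the comparison itself. Substituting the power series into the equivariance relation and using $\lambda^{\sum_k\alpha_k\mbm_k}=\prod_{j=1}^r\lambda_j^{\alpha\cdot\mbm^j}$, I would match the coefficient of $z^\alpha$ on the two sides to get $\prod_{j}\lambda_j^{\alpha\cdot\mbm^j}=\prod_j\lambda_j^{m_{ij}}$ whenever $c^{(i)}_\alpha\neq 0$. Since this holds for all $\lambda\in T^r$ and a character of $T^r$ is trivial only when all its exponents vanish, every monomial actually occurring in $\sigma_i$ must satisfy $\alpha\cdot\mbm^j=m_{ij}$ for all $j$, i.e. $\alpha\in E_i$. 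Combining this with $\sigma_i(0)=0$ (no constant term) and the fact, recorded below \eqref{E:J}, that $J_\sigma(z)=z$ forces the linear part of $\sigma_i$ to be exactly $z_i$ (note that resonance alone already rules out a term $z_l$ with $l\neq i$, since $e_l\in E_i$ would mean $\mbm_l=\mbm_i$, excluded by distinctness), one arrives at $\sigma_i(z)=z_i+g_i(z)$ with $g_i$ a sum of nonlinear $i$-th resonant monomials.

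For the inverse I would invert the equivariance relation to obtain $\sigma^{-1}\circ\rho(\lambda)=\rho(\lambda)\circ\sigma^{-1}$ on $\sigma(D)$, which is again $\rho$-invariant; as $\sigma^{-1}$ is holomorphic near the origin with $\sigma^{-1}(0)=0$ and identity linear part, the same coefficient comparison shows that each component of $\sigma^{-1}$ equals $z_i$ plus nonlinear $i$-th resonant monomials. To guarantee that these expressions are genuine polynomials (which is what gives meaning to the degree bound in Theorem~\ref{T:main}(ii)), I would check that each $E_i$ is finite: were $E_i$ infinite, the polyhedron $\{x\ge 0:\ x\cdot\mbm^j=m_{ij},\ 1\le j\le r\}$ would be unbounded, so its recession cone would contain a nonzero $\delta\in\nv^n$ with $\delta\cdot\mbm^j=0$ for all $j$, yielding the nonconstant $\rho$-invariant monomial $z^\delta$ and contradicting the standing hypothesis; hence $\mu_i<\infty$ and $\deg\sigma_i\le\mu_i$.

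I expect the only genuine subtlety to lie in the passage from the equivariance identity to the monomial condition — making the character-triviality step precise and handling the power series rigorously — together with the finiteness of $E_i$ via the recession-cone observation. The equivariance is essentially free once \eqref{E:linear} is available, and the proper ordering of the weights, though convenient for the subsequent degree analysis (it forces $g_i$ to involve only the variables $z_l$ with $l\le i$), plays no role in the resonance statement proper.
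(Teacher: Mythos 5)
Your argument is correct, but it takes a genuinely different route from the paper's. The paper never uses equivariance of $\sigma$ itself: it applies the transformation formula \eqref{E:T} of the Bergman metric tensor to $f=\rho(\lambda)$ with $w=0$ (equation \eqref{E:TD0}), runs the torus-character comparison on the coefficients of the entries $t_{ik}^D(z,0)$, concludes via \eqref{E:J} that $\Jac_\sigma(z)=I_n+N(z)$ with $N$ strictly lower triangular and with resonant entries (this is where the proper ordering enters), and then recovers $\sigma_i=z_i+g_i$ by integration using $\sigma(0)=0$; the statement for $\sigma^{-1}$ is obtained by a back-substitution induction that relies on that triangular structure. You instead apply the linearity relation \eqref{E:linear} to $f=\rho(\lambda)\in\aut(D)$ --- legitimate, since the identification of $L_f$ with the linear part $J_f$ is established in the paper just before \eqref{E:linear2}, prior to the proposition --- to get $\sigma\circ\rho(\lambda)=\rho(\lambda)\circ\sigma$, and run the same character-triviality step directly on the components of $\sigma$. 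The engine of both proofs is identical; the differences lie in what surrounds it, and your version buys three things: the inverse comes for free (the inverse of an equivariant map is equivariant, so no ordering and no induction are needed); it makes transparent your correct observation that the proper ordering is irrelevant to the resonance statement proper (the paper genuinely uses it, but only for the triangularity and the treatment of $\sigma^{-1}$); and your recession-cone proof that each $E_i$ is finite fills in a point the paper leaves implicit in defining $\mu_i$ as a maximum, which is exactly what turns ``all monomials are resonant'' into a degree bound. What the paper's route buys in exchange: it needs only the elementary transformation law \eqref{E:T} rather than the linearity theorem \eqref{E:linear}, and the triangular polynomial form of $\Jac_\sigma$ it produces is what actually justifies global invertibility of $\sigma$; your appeal to $\sigma^{-1}$ on all of $\sigma(D)$ is, strictly speaking, licensed only locally near $0$ by $\Jac_\sigma(0)=I_n$, but this is harmless since the coefficient comparison is local and balls about the origin are $\rho$-invariant ($\rho(\lambda)$ being diagonal unitary).
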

\begin{proof}
By \eqref{E:T}, we have
\begin{equation}\label{E:TD}
T_D(z,w)=\overline{\Jac_{\rho(\lambda)}^t(w)}T_D(\rho(\lambda)(z),\rho(\lambda)(w))\Jac_{\rho(\lambda)}(z).
\end{equation}
Setting $w=0$ in \eqref{E:TD}, we obtain
\begin{equation}\label{E:TD0}
T_D(z,0)=\diag(\lambda^{-\mbm_1},\cdots,\lambda^{-\mbm_n})T_D(\rho(\lambda)(z),0)\diag(\lambda^{\mbm_1},\cdots,\lambda^{\mbm_n}).
\end{equation}

Write $t_{ik}^D(z,0)=\sum\limits_{|\alpha|\ge 0} a_{ik}^\alpha z^\alpha=\sum\limits_{|\alpha|\ge 0} a_{ik}^\alpha z_1^{\alpha_1}\cdots z_n^{\alpha_n}$, $1\le i,k\le n$. Set $\beta=(\beta_1,\cdots,\beta_r):=(\alpha\cdot \mbm^1,\cdots, \alpha\cdot \mbm^r)$. Then from \eqref{E:TD0}, we get
\begin{equation}\label{E:a}
a_{ik}^\alpha=\lambda^{-\mbm_i+\mbm_k+\beta}a_{ik}^\alpha.
\end{equation}
Since \eqref{E:a} holds for any $\lambda\in T^r$, $a_{ik}^\alpha$ can be nonzero only when
$$-\mbm_i+\mbm_k+\beta=0,$$
which is satisfied if and only if
\begin{equation}\label{E:a1}
\alpha+e_k\in E_i.
\end{equation}
Here $e_k$ denotes the $k$-th unit multi-index.

Therefore, one can write $T_D(z,0)$ as
\begin{equation}\label{E:M}
T_D(z,0)=T_D(0,0)+M(z),
\end{equation}
where $T_D(0,0)=\Diag(\tau_1,\cdots,\tau_n)$ and $M(z)=[M_{ij}(z)]_{1\le i,j\le n}$ with $M_{ij}=0$ for $i\le j$.

By \eqref{E:M} and \eqref{E:J}, one has
\begin{equation}\label{E:Jac}
\Jac_\sigma(z)=T_D(0,0)^{-1}T_D(z,0)=I_n+T_D(0,0)^{-1}M(z)=:I_n+N(z),
\end{equation}
where $N(z)=[N_{ij}(z)]_{1\le i,j\le n}$ with $N_{ij}(z)=0$ for $i\le j$ and $N_{ij}(z)=\tau_i^{-1}M_{ij}(z)$ for $i>j$.

Since $\sigma(0)=0$, from \eqref{E:Jac} and \eqref{E:a1}, one sees that $\sigma(z)$ is of the desired form.

Since $J_\sigma=Id$ and each $g_i(z)$ only contains terms involving $z_j$'s with $j<i$ and of weighted degree equal to $\mbm_i$, a routine induction shows that each component of $\sigma^{-1}(z)$ also only contains resonant monomials (cf. \cite[Corollary 3.2]{R:degree}).
\end{proof}

Obviously, Proposition \ref{P:resonant} implies Theorem \ref{T:main}, (ii).

\begin{rmk}
Theorem \ref{T:main} gives a complete description of all possible forms of biholomorphisms of quasi-Reinhardt domains fixing the origin. It also gives a more transparent description of the ``quasi-resonance order" (cf. \cite{R:quasi, DR:auto}).
\end{rmk}

\section{Quasi-circular domains}\label{S:QC}

Let $D$ be a bounded quasi-circular domain containing the origin. The weight in the quasi-circular case is given by a set of $n$ positive integers $m_i$, $1\le i\le n$, with $\textup{gcd}(m_1,\cdots,m_n)=1$. A standard proper ordering of $m_i$'s is requiring that $m_i\le m_j$ for $i<j$.

First of all, the weight of a quasi-circular domain $D$ is not unique, if $D$ is in fact a quasi-Reinhardt domain of rank greater than one. For instance for $D=\bv^2$, the unit ball in $\cv^2$, any $(m_1,m_2)\in (\zv^+)^2$ is a weight. The classical Cartan's Linearity Theorem applies to $\bv^2$ and says that an automorphism of $\bv^2$ fixing the origin is linear. From the point of view of the resonance order, the weight which dictates the linearity is $(1,1)$ or $(m_1,m_2)$ with $\textup{gcd}(m_1,m_2)=1$. Therefore, if one considers $\bv^2$ as a quasi-circular domain with weight $(1,m)$ with $m\ge 2$, then one can not get the desired information on the degree of its automorphisms.

Secondly, the weight of a quasi-circular domain is not a biholomorphic invariant in general. For instance, if one considers $D=\phi_k(\bv^2)$ with $\phi_k(z_1,z_2)=(z_1,z_2+z_1^k)$, then $D$ is quasi-circular with weight $(1,k)$, but as we just noted above $\bv^2$ can be considered as a quasi-circular domain with any weight. And as noted in \cite[Example 5.1]{DR:auto}, the rank of a quasi-Reinhardt domain is also not invariant under biholomorphisms, and one can consider the \textit{maximal rank} of a quasi-Reinhardt domain.

We can define a \textit{genuine} quasi-circular domain to be a quasi-Reinhardt domain with maximal rank one. For the study of the degree of automorphisms and biholomorphisms of quasi-circular domains, the genuine case and the ``fake" case are vastly different. In dimension two, a complete study of the degree of origin-preserving automorphisms of quasi-circular domains was carried out in \cite{YZ:quasi}. In higher dimensions, a similar study would be very complicated, especially for ``fake" quasi-circular domains.

\end{document}